\title{Curvature of basis exchange walks}
\author{Isabel Detherage\footnote{Supported by NSF CCF-2420130} \\ UC Berkeley}
\date{\today}
\newcommand{\squaretikz}[6]{
\begin{tikzpicture}[scale=0.4]
    % Draw base square
    \draw[#1, thick] (0,0) -- (0,1); % left
    \draw[#2, thick] (0,1) -- (1,1); % top
    \draw[#3, thick] (0,0) -- (1,0); % bottom
    \draw[#4, thick] (1,0) -- (1,1); % right

    % Diagonals
    \draw[#5, thick] (0,0) -- (1,1); % diag1
    \draw[#6, thick] (1,0) -- (0,1); % diag2
\end{tikzpicture}
}
\newtheorem{theorem}{Theorem}[section]
\newtheorem{lemma}[theorem]{Lemma}
\newtheorem{proposition}[theorem]{Proposition}
\newtheorem{corollary}[theorem]{Corollary}
\theoremstyle{definition}
\newtheorem{definition}{Definition}[section]
\newtheorem{remark}{Remark}
\begin{document}

\maketitle

\begin{abstract}
    We prove both lower and upper bounds on the Ollivier-Ricci curvature of the basis exchange walk on a matroid. We give several examples of non-negatively curved basis exchange walks and negatively curved basis exchange walks.
\end{abstract}

\section{Introduction}
In this note, we consider the discrete Ollivier-Ricci curvature of a specific class of Markov chains, basis exchange walks on matroids. Ollivier--Ricci curvature on metric spaces was first introduced in \cite{ORcurv}, as an analog to Ricci curvature on manifolds. This definition allowed for several natural extensions of results in geometry, such as Bonnet-Myers theorem and a lower bound on the spectral gap (both of which apply only to spaces with non-negative curvature). Notably, non-negative discrete curvature also implies bounds on the mixing time of a Markov chain (see \cite{LPW} on path coupling for more details).

Basis exchange walks on matroids are a well-studied class of Markov chains which exhibit many desirable properties: lower bounded spectral gap and lower bounded modified log Sobolev constant \cite{kldivbew}, spectral independence, and fast mixing \cite{ALOVIV}. We note that all of these properties are \textit{implied} by non-negative Ollivier-Ricci curvature and pose the natural follow-up question: 

\smallskip
\centerline{\text{Do all basis exchange walks on matroids have non-negative Ollivier-Ricci curvature?}}

\smallskip
We answer this question negatively: despite the many desirable properties uniformly exhibited by basis exchange walks, there exist both negatively and non-negatively curved basis exchange walks. Furthermore, we show even among standard classes of matroids, such as linear matroids or graphic matroids, there exist both matroids with negatively and non-negatively curved basis exchange walks. This work reinforces the idea that non-negative curvature is a somewhat delicate phenomenon, which cannot be guaranteed by a strongly log-concave stationary distribution (a feature of all basis exchange walks) or the representability of the matroid.

We highlight in particular the implication this has for the connection between spectral independence and Ollivier-Ricci curvature, both local certificates of expansion. Concurrent works \cite{couplingtospecind} and \cite{couplingliu} prove non-negative Ollivier-Ricci curvature \textit{implies} spectral independence (see either work for precise, quantified statements). The validity of the converse to this statement was not previously known, but this work disproves it: basis exchange walks are uniformly 1-spectrally independent, but we establish several examples with negative Ollivier-Ricci curvature.

\subsection{Techniques and overview}
To prove the lower bound on curvature (Theorem \ref{thm:lb}), we analyze a natural coupling of two walks starting from adjacent bases: in the down step, both walks drop the same element if possible, and in the up-step, they add the same element if possible. For the upper bound (Theorem \ref{thm:ub}), we prove a lower bound on the expected distance of two walks starting from adjacent bases for \textit{all} possible couplings. We do so by exhaustively considering \textit{all} events where the distance can drop to 0, and upper bounding the probability that the distance goes to 0 under any coupling. We then consider events where the distance \textit{must} be at least 2 for any coupling and lower bound this probability. 

In Section \ref{sec:pre} we give relevant preliminaries. In Section \ref{sec:lb} we give a coupling of the down-up walk and use this to prove a lower bound on curvature; we use the same coupling in Section \ref{sec:nonnegex} to show non-negative curvature for several examples. In Section \ref{sec:ub} we prove an upper bound on curvature, and in Section \ref{sec:negex} we give examples of basis exchange walks with negative curvature.  

\section{Preliminaries}\label{sec:pre}
\subsection{Matroids and basis exchange walk}

\begin{definition}[Matroid]
    A \textit{matroid} $\mathcal{M} = (E, \mathcal{B})$ is a set $E$ and a non-empty collection of its subsets $\mathcal{B}$, called \textit{bases}, satisfying the following: \begin{itemize}
        \item no proper subset of a basis is itself a basis, and
        \item for any $B_1, B_2 \in \mathcal{B}$, if $b_1 \in B_1 \backslash B_2$, there exists a $b_2 \in B_2 \backslash B_1$ such that $B_1 - b_1 + b_2 \in \mathcal{B}$.
    \end{itemize}
    The second condition is known as the \textit{basis exchange property}.
\end{definition}

\begin{definition}[Rank]
    The \textit{rank} of a matroid $\mathcal{M} = (E, \mathcal{B})$ is the size of any basis $B \in \mathcal{B}$. Note that by the basis exchange property, all bases must be the same size.
\end{definition}

A particularly interesting class of matroids are those induced by a graph $G$, known as graphic matroids. We consider several graphic matroids as examples, so we define them here.
\begin{definition}[Graphic matroids]
    A \textit{graphic matroid} induced by a graph $G$ is a matroid whose basis sets are the spanning forests of $G$. If $G$ is connected, the basis sets are the spanning trees of $G$.
\end{definition}

\begin{definition}[Down-up walk]
    The \textit{down-up walk} or the \textit{basis exchange walk} is a Markov chain on the bases of a matroid. Starting from a basis $S \in \mathcal{B}$, the random walk proceeds by dropping an element $u \in S$ uniformly at random, then moving to a basis containing $S-u$ uniformly at random.
\end{definition}
We note that the above describes the basis exchange walk on a set of bases given by the uniform distribution (i.e., the uniform distribution over all bases will be the stationary distribution). Given a distribution $\pi$ over $\mathcal{B}$, one can define the corresponding down-up walk, but for the purposes of this note, we will only be concerned with the uniform case. 

\subsection{Ollivier--Ricci curvature}

We are interested in the curvature of this walk, in particular the \textit{Ollivier--Ricci curvature}. In order to define Ollivier--Ricci curvature, we first define the Wasserstein distance.

\begin{definition}[Wasserstein distance]
    Let $\mu, \nu$ be two probability distributions over some metric space $(X, d)$. The \textit{Wasserstein distance} between $\mu$ and $\nu$ is defined as \[\mathcal{W}(\mu, \nu) = \min_{X \sim \mu, Y \sim \nu} \mathbb{E} d(X, Y),\] where the minimum is over all possible couplings of $\mu$ and $\nu$.
\end{definition}

\begin{definition}[Ollivier--Ricci curvature]
    For a Markov chain $P$ over a metric space $(X, d)$, we define the \textit{Ollivier--Ricci curvature} of $P$ as the minimum $\kappa$ such that for all $S, T \in X$ \[\mathcal{W}(P(S, \cdot), P(T, \cdot)) \leq (1-\kappa)\cdot d(S,T).\] 
\end{definition} Note if $d$ is the metric induced by $P$, i.e., $d(S,T) := \inf \{t : P^t(S,T) > 0\}$, we can consider the minimum over only \textit{adjacent} $S,T \in X$ in the above definition (see \cite{ORcurv} for further details). Throughout, we will take $d$ to be the metric induced by $P$.

\section{A candidate coupling and a lower bound}\label{sec:lb}
\subsection{Coupling}
We define a candidate coupling, which we call the \textit{down-step coupling}, for the down-up walk starting from any adjacent bases $S, T$. The following will be useful to denote all possible ``up-steps" of a given down-step:
\begin{definition}
    If $B$ is a basis and $u \in B$, let $N(B - u) = \{x \in E : B - u + x \text{ is a basis}\}.$
\end{definition}
\begin{definition}[Down-step coupling]
    Let $S = \{s, u_1, \hdots, u_{k-1}\}$, $T = \{t, u_1, \hdots, u_{k-1}\}$ be bases of a rank-$k$ matroid. Let $P$ be the basis exchange walk, $X \sim P(S, \cdot)$, and $Y \sim P(T, \cdot)$. We couple $X$ and $Y$ by first matching the down-steps when possible, then matching the up-steps when possible. More concretely: \begin{itemize}
        \item If $S$ drops $s$, then $T$ drops $t$. Since $S-s = T-t$, we can always couple the up steps to be the same. 
        \item If $S$ drops $u_i$, then $T$ drops $u_i$. Without loss of generality, suppose \[\#N(S-u_i) \geq \#N(T-u_i).\] We couple the up steps in the following way: \begin{itemize}
            \item If $S-u_i$ adds $t$, $T-u_i$ adds $s$ (note if $t \in N(S-u_i)$, then $s \in N(T-u_i)$ since $S-u_i +t = T-u_i + s$).
            \item If $S-u_i$ adds $v \in N(S-u_i) \cap N(T-u_i)$, $T-u_i$ adds $v$.
            \item If $S-u_i$ adds $v \notin N(S-u_i) \cap N(T-u_i)$ with $v \neq t$, then $T-u_i$ adds a random element from $N(T-u_i)$ with any distribution preserving the desired marginal across $N(T-u_i)$.
        \end{itemize}
    \end{itemize} The distribution mentioned in the last line is simply a distribution such that the up-step for $T-u_i$ is uniform across $N(T-u_i)$; such a distribution will always be possible given that $\#N(S-u_i) \geq \#N(T-u_i)$. Note that with this coupling, the walks are at distance at most $2$ by the basis exchange property.
\end{definition}

\begin{remark}
    We remark that while this coupling might not be optimal, coupling different down steps for $S$ and $T$ can only decrease $\mathbb{E}d(X,Y)$ in a small number of cases. For example, consider $S = \{s, u, u'\}$ and $T = \{t,u,u'\}$. Suppose $S$ drops $u$ and adds $a$, while $T$ drops $u'$ and adds $b$, so $S$ moves to $\{s,u,a\}$ and $T$ moves to $\{t,b,u'\}$. We are only in a better position (i.e., at a distance less than 2) if: (1) $a = t$ and $b = s$, in which case the distance is 1, or (2) $a = u'$ and $b = u$, in which case the distance is also 1. We see that for most possible up-steps, it is more advantageous to couple based on the down-step.
    
    We make this more precise with an upper bound on curvature in Section \ref{sec:ub}.
\end{remark}
\paragraph{Example:} We give an example of this coupling on the graphic matroid induced by $K_4$. Let $S$ and $T$ be the following spanning trees, respectively:

\begin{center}
    
\begin{tikzpicture}[scale = 0.5]

% Define square corners
\coordinate (A) at (0,0);
\coordinate (B) at (1,0);
\coordinate (C) at (1,1);
\coordinate (D) at (0,1);

% Draw solid lines (left, top, bottom)
\draw[very thick] (A) -- (D); % left
\draw[very thick] (D) -- (C); % top
\draw[very thick] (A) -- (B); % bottom

% Draw dotted right side
\draw[dotted, thick] (B) -- (C); % right

% Draw dotted diagonals
\draw[dotted, thick] (A) -- (C);
\draw[dotted, thick] (B) -- (D);

\end{tikzpicture} \qquad
\begin{tikzpicture}[scale = 0.5]
% Define square corners
\coordinate (A) at (0,0);
\coordinate (B) at (1,0);
\coordinate (C) at (1,1);
\coordinate (D) at (0,1);

% Draw solid lines (left, top, bottom)
\draw[very thick] (A) -- (D); % left
\draw[very thick] (D) -- (C); % top
\draw[dotted, thick] (A) -- (B); % bottom

% Draw dotted right side
\draw[dotted, thick] (B) -- (C); % right

% Draw dotted diagonals
\draw[dotted, thick] (A) -- (C);
\draw[very thick] (B) -- (D);

\end{tikzpicture}
\end{center} The following table shows how walks would move from $S$ and $T$ in the down-step coupling, where $s$ is the bottom edge and $t$ is the diagonal edge from the upper left to bottom right. For clarity, we group outcomes under which element is dropped in the down step of the down-up walk.

\begin{figure}[h]

\renewcommand{\arraystretch}{2.2}

\begin{center}
\begin{tabular}{|>{\centering\arraybackslash}m{1cm}
                |>{\centering\arraybackslash}m{0.7cm}
                |>{\centering\arraybackslash}m{0.7cm}
                |>{\centering\arraybackslash}m{0.7cm}
                |>{\centering\arraybackslash}m{0.7cm}
                |>{\centering\arraybackslash}m{0.7cm}
                |>{\centering\arraybackslash}m{0.7cm}
                |>{\centering\arraybackslash}m{0.8cm}
                |>{\centering\arraybackslash}m{1cm}
                |>{\centering\arraybackslash}m{0.7cm}
                |>{\centering\arraybackslash}m{1.7cm}|}
\hline
initial state & \multicolumn{3}{c|}{drop $s$/drop $t$} & \multicolumn{3}{c|}{drop top edge} & \multicolumn{4}{c|}{drop left edge} \\
\hline
\squaretikz{}{}{}{dotted}{dotted}{dotted}
& \squaretikz{}{}{}{dotted}{dotted}{dotted}
& \squaretikz{}{}{dotted}{}{dotted}{dotted} 
& \squaretikz{}{}{dotted}{dotted}{dotted}{} 
& \squaretikz{}{}{}{dotted}{dotted}{dotted} 
& \squaretikz{}{dotted}{}{}{dotted}{dotted} 
& \squaretikz{}{dotted}{}{dotted}{}{dotted} 
& \squaretikz{dotted}{}{}{dotted}{dotted}{}
& \squaretikz{}{}{}{dotted}{dotted}{dotted} 
& \squaretikz{dotted}{}{}{dotted}{}{dotted} 
& \squaretikz{dotted}{}{}{}{dotted}{dotted} \\
\hline
\squaretikz{}{}{dotted}{dotted}{dotted}{} 
& \squaretikz{}{}{}{dotted}{dotted}{dotted}
& \squaretikz{}{}{dotted}{}{dotted}{dotted} 
& \squaretikz{}{}{dotted}{dotted}{dotted}{} 
& \squaretikz{}{}{dotted}{dotted}{dotted}{} 
& \squaretikz{}{dotted}{dotted}{}{dotted}{} 
& \squaretikz{}{dotted}{dotted}{dotted}{}{} 
& \squaretikz{dotted}{}{}{dotted}{dotted}{}
& \squaretikz{}{}{dotted}{dotted}{dotted}{}
& \squaretikz{dotted}{}{dotted}{dotted}{}{} 
& \squaretikz{dotted}{}{}{dotted}{dotted}{} \hspace{1mm} \squaretikz{dotted}{}{dotted}{dotted}{}{} \hspace{1mm} \squaretikz{}{}{dotted}{dotted}{dotted}{} \\
\hline
$\mathbb{P}$ & $\frac{1}{9}$ & $\frac{1}{9}$ & $\frac{1}{9}$ & $\frac{1}{9}$ & $\frac{1}{9}$ & $\frac{1}{9}$ & $\frac{1}{12}$ & $\frac{1}{12}$ & $\frac{1}{12}$ & $\frac{1}{36}$ \footnotesize{each pair} \\
\hline
& \multicolumn{3}{c|}{\small{add same edge}} & \multicolumn{3}{c|}{\small{add same edge}} & \footnotesize{add $t$ / add $s$} & \footnotesize{add left edge} & \footnotesize{add diag.} &  \\
\hline
\end{tabular}
\end{center}
\caption{The down-step coupling for two neighboring states in the graphic matroid induced by $K_4$.}
\end{figure}

\subsection{Lower bound}
Our first theorem provides a lower bound on the curvature of the basis exchange walk in terms of the rank and the size of the ground set.

\begin{theorem}\label{thm:lb}
    The basis exchange walk of a rank-$k$ matroid over a set of size $n > k+1$ satisfies \[\kappa \geq -1 + \frac{2}{k} + \frac{3(k-1)}{k(n- k+ 1)}.\] If $n=k+1$, the basis exchange walk satisfies $\kappa \geq 1/k$.
\end{theorem}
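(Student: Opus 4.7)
The plan is to upper bound $\mathcal{W}(P(S,\cdot), P(T,\cdot))$ using the down-step coupling for adjacent bases $S = \{s, u_1, \ldots, u_{k-1}\}$ and $T = \{t, u_1, \ldots, u_{k-1}\}$, then convert to a bound on $\kappa$ via $\kappa \geq 1 - \mathbb{E}[d(X,Y)]$ since $d(S,T) = 1$. Set $N := n - k + 1$, $a_i = \#N(S - u_i)$, $b_i = \#N(T - u_i)$, $c_i = \#(N(S - u_i) \cap N(T - u_i))$, and $\delta_i = 1$ if $t \in N(S - u_i)$ and $0$ otherwise (equivalent to $s \in N(T - u_i)$, since $S - u_i + t = T - u_i + s$). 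Note $a_i, b_i \leq N$ and $c_i \geq 1$ as $u_i$ lies in both neighborhoods; take $a_i \geq b_i$ as in the coupling. Splitting on the dropped element, dropping $s$ contributes zero, and enumerating the four up-step outcomes when $u_i$ is dropped gives, by direct calculation,
\[
\mathbb{E}[d(X,Y) \mid \text{drop } u_i] = 2 - g_i, \qquad g_i := \frac{c_i + 2\delta_i + (a_i - c_i - \delta_i)\,\delta_i/b_i}{a_i}.
\]

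The heart of the argument is a structural lemma controlling $g_i$ in the hard case $\delta_i = 0$: \emph{if $\delta_i = 0$ then $N(S - u_i) = N(T - u_i)$, so $c_i = a_i$ and $g_i = 1$.} I would prove it using fundamental circuits. The condition $\delta_i = 0$ is equivalent to $u_i \notin C(S,t)$, where $C(S,t)$ is the unique circuit of $S + t$; note $s, t \in C(S, t)$ since $T = S - s + t$ is a basis. For any $v \in N(S - u_i) \setminus \{t\}$ we have either $v = u_i$ (trivially in $N(T - u_i)$) or $v \notin S \cup T$, in which case $B := S - u_i + v$ is a basis and $C(S,t) \subseteq B + t$. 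Since $B$ is a basis, $B + t$ contains exactly one circuit, forcing $C(B, t) = C(S, t) \ni s$, so $B - s + t = T - u_i + v$ is a basis, giving $v \in N(T - u_i)$. Symmetry yields the reverse inclusion and the lemma.

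With the lemma in hand, when $\delta_i = 1$ the bound $g_i \geq (c_i + 2)/a_i \geq 3/a_i \geq 3/N$ follows from $c_i \geq 1$ and $a_i \leq N$; when $\delta_i = 0$ the lemma gives $g_i = 1 \geq 3/N$ provided $N \geq 3$. Thus for $n > k+1$, $\sum_{i=1}^{k-1} g_i \geq 3(k-1)/N$, and
\[
\mathbb{E}[d(X,Y)] = \frac{1}{k}\sum_{i=1}^{k-1}(2 - g_i) \leq \frac{2(k-1)}{k} - \frac{3(k-1)}{kN},
\]
which rearranges to the first claimed bound. For $n = k+1$, $N = 2$, so the uniform bound $g_i \geq 3/N$ fails when $\delta_i = 0$; however, the lemma still gives $g_i \geq 1$ in every case, yielding $\mathbb{E}[d(X,Y)] \leq (k-1)/k$ and hence $\kappa \geq 1/k$.

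I expect the main obstacle to be the structural lemma. The classical fact that $B + t$ has a unique circuit when $B$ is a basis is short, but some care is needed with the edge cases where $v \in \{u_i, s, t\}$ and with the parallel scenario where $C(S,t) = \{s,t\}$ and $\delta_i = 0$ for every $i$. Everything else is a direct case enumeration and a short algebraic rearrangement.
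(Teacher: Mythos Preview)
Your argument is correct and follows the same route as the paper: both use the down-step coupling, split on whether $S-u_i+t\in\mathcal{B}$ (your $\delta_i$), prove the structural lemma that $\delta_i=0$ forces $N(S-u_i)=N(T-u_i)$, and extract the bound $\mathbb{E}[d(X,Y)\mid u_i\text{ dropped}]\le 2-3/(n-k+1)$ in the complementary case. The only differences are cosmetic: you prove the lemma via fundamental circuits rather than the paper's direct basis-exchange swap, and you package the conditional expectation as $2-g_i$; note that the claimed \emph{equality} for $g_i$ depends on how the coupling is completed in the leftover case and is not literally valid for every completion of the down-step coupling, but since you only invoke the inequality $g_i\ge (c_i+2\delta_i)/a_i$, which holds regardless, this does not affect the proof.
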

\begin{proof}
    We show that the above coupling gives us the desired bound. 
    
    First note if $S$ drops $s$ and $T$ drops $t$ (which occurs with probability $\frac{1}{k}$) we have \[\mathbb{E} \left (d(X,Y) \hspace{1mm} | \hspace{1mm} s,t \text{ dropped in down step}\right ) = 0.\]

    Now we consider what happens when we drop $u_i$. Letting $U' = \{u_1,\hdots, u_{i-1}, u_{i+1}, \hdots, u_{k-1}\}$, we have two cases:
    \paragraph{Case 1:} $U' + s + t \notin \mathcal{B}$. 
    
    We claim $N(S - u) = N(T-u)$ by the basis exchange property. Let $x \in N(S-u)$ and $y \in N(T-u)$, so \[S - u + x = U'+s+x \in \mathcal{B}, \qquad  T - u + y = U' + t + y \in \mathcal{B}.\] Applying the basis exchange property to $U'+s+x, U' + t + y,$ and $x$, we must have $U' + s + y \in \mathcal{B}$ since $U' + s + t \notin \mathcal{B}$, so $y \in N(S-u)$. Similarly, we can see that $U' + t + x \in \mathcal{B}$, so $x \in N(T-u)$. 
    
    Since $N(S-u) = N(T-u)$, we can couple these walks to have the same up-step and in this case \[\mathbb{E}\left (d(X,Y) \hspace{1mm} | \hspace{1mm} u_i \text{ dropped} \right )= 1.\]
    
    \paragraph{Case 2:} $U' + s + t \in \mathcal{B}$

    In this case, we can couple the walks to both move to $U' + s + t$ with some probability and the distance will be 0. Note that $N(S-u), N(T-u) \leq n-(k-1)$ since $(S -u) \cap N(S-u) = \emptyset$ (and similarly for $T$), so \[\min\{\mathbb{P}(U'+s \rightarrow U' + s + t), \mathbb{P}(U' + t \rightarrow U' + s + t)\} \geq \frac{1}{n-k+1}\] where $\mathbb{P}(U'+s \rightarrow U' + s + t)$ denotes the probability of adding $t$ on the up-step, given $u_i$ was dropped on the down-step (i.e., $1 / N(S-u_i)$). Similarly, we can couple the walks to stay at $S$ and $T$ respectively, which also occurs with probability $\geq \frac{1}{n-k+1}$, and the distance is 1. Otherwise, the distance is always less than or equal to 2 since \[U' + s + x \sim U' + s + t \sim U' + t + y.\] All together then we have \[\mathbb{E}\left (d(X,Y) \hspace{1mm} |\hspace{1mm} u_i \text{ dropped}\right ) \leq 2 \cdot \left (1 - \frac{2}{n-k+1} \right ) + \frac{1}{n-k+1} = 2 - \frac{3}{n-k+1}.\] 

    \medskip
    If $n > k+1$, in either case we have \[\mathbb{E}\left (d(X,Y) \hspace{1mm}|\hspace{1mm} u_i \text{ dropped} \right ) \leq 2 - \frac{3}{n-k+1}.\] Then we have \[\mathcal{W}(P(S, \cdot), P(T, \cdot)) \leq \frac{(k-1)}{k} \cdot \left (2 - \frac{3}{n-k+1} \right ).\] By definition of $\kappa,$ we have: \begin{align}
        \kappa &\geq 1 - \frac{(k-1)}{k}\cdot \left (2 - \frac{3}{n-k+1} \right ) \\
        & = 1 - \frac{2(k-1)}{k} + \frac{3(k-1)}{k(n-k+1)} \\
        & = -1 + \frac{2}{k} + \frac{3(k-1)}{k(n-k+1)}.
    \end{align}

    If $n = k+1$, then in either case $\mathbb{E}(d(X,Y) \hspace{1mm} | \hspace{1mm} u_i \text{ dropped}) \leq 1$, so we have $\kappa \geq 1/k.$
\end{proof}

By more carefully tracking which case we are in, we can sharpen this bound. We define the set of elements we can drop that would place us in case two from above:

\begin{definition}
    Let $J = \{u : t \in N(S-u) \text{ and } u\neq s\}.$
\end{definition}
\begin{corollary} The basis exchange walk over any matroid satisfies
    \[\kappa \geq \min_{S\sim T}\left \{ -\frac{\#J}{k} + \frac{1}{k} + \frac{1}{k}\sum_{u \in J} \frac{2 + \#(N(S-u) \cap N(T-u))}{\max\{\#N(S-u), \#N(T-u)}\}\right \}.\]
\end{corollary}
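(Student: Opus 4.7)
My plan is to follow the proof of Theorem \ref{thm:lb} using the same down-step coupling, but to replace the crude Case 2 bound with the exact conditional expected distance contributed by the coupling and then aggregate. Since $J$ is exactly the set of $u \in S \cap T$ for which $U' + s + t \in \mathcal{B}$ (equivalently, Case 2 applies), there are $\#J$ drops in Case 2 and $k - 1 - \#J$ drops in Case 1. From the proof of Theorem \ref{thm:lb}, Case 1 contributes conditional expected distance exactly $1$, and dropping $s$ (coupled to dropping $t$) contributes $0$.

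The substantive step is a sharpened Case 2 bound that retains the intersection $N(S - u) \cap N(T - u)$. Fix $u \in J$ and, without loss of generality, let $m := \#N(S-u) \geq \#N(T-u)$ and $c := \#(N(S-u) \cap N(T-u))$. Under the coupling, $S$'s up-step is uniform on $N(S-u)$, and I partition its $m$ possible outcomes into three groups: (a) the single outcome $v = t$ is paired with $T$ adding $s$, so both walks reach $U' + s + t$ at distance $0$; (b) each of the $c$ outcomes $v \in N(S-u) \cap N(T-u)$ is paired with $T$ adding the same $v$, yielding states $U' + s + v$ and $U' + t + v$ at distance exactly $1$; (c) the remaining $m - c - 1$ outcomes are bounded by distance $2$ via the basis exchange property. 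This partition is valid because $t \in N(S-u) \setminus N(T-u)$ (since $t \in T - u$ forces $T - u + t \notin \mathcal{B}$), so $t$ is the unique outcome in group (a) and is not double-counted in the intersection. Summing yields
\[
\mathbb{E}[d(X,Y) \mid u \text{ dropped}] \;\leq\; \frac{c + 2(m - c - 1)}{m} \;=\; 2 - \frac{c + 2}{m}.
\]

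Aggregating the three contribution types with their respective $1/k$ probabilities gives
\[
\mathcal{W}(P(S,\cdot), P(T,\cdot)) \;\leq\; \frac{k - 1 - \#J}{k} + \frac{1}{k} \sum_{u \in J}\left(2 - \frac{2 + \#(N(S-u) \cap N(T-u))}{\max\{\#N(S-u), \#N(T-u)\}}\right),
\]
and since $d(S,T) = 1$ for adjacent $S, T$, the definition of $\kappa$ combined with taking the minimum over adjacent $S \sim T$ produces the stated inequality after regrouping $(k - 1 - \#J)/k + 2\#J/k = 1 - 1/k + (\#J - 1)/k$ on one side with $+1/k - \#J/k$ on the other. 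I expect the main obstacle to be purely bookkeeping — making sure the partition into groups (a), (b), (c) genuinely exhausts $N(S-u)$ with the stated counts; once that is in hand, the algebra goes through routinely.
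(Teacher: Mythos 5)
Your proof is correct and is exactly the argument the paper intends: the paper's proof of this corollary is the one-line remark that it ``follows immediately from the down-step coupling,'' and you have supplied precisely that computation, sharpening Case 2 of Theorem \ref{thm:lb} to $2 - (2 + \#(N(S-u)\cap N(T-u)))/\max\{\#N(S-u),\#N(T-u)\}$ and aggregating over the $k$ possible down-steps. The partition of $N(S-u)$ into $\{t\}$, the intersection, and the remainder is valid for the reason you give ($t \in N(S-u)\setminus N(T-u)$ whenever $u \in J$), and the final algebra checks out.
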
 
\begin{proof}
    This follows immediately from the down-step coupling and definitions. 
\end{proof}

While this expression is not particularly friendly, it makes explicit the idea that if there is a high overlap between $N(S-u)$ and $N(T-u)$ for all $u$, the basis exchange walk will be positively curved. In the following examples, we directly analyze the coupling, which is equivalent to applying this corollary (although we refrain from referring to this expression).

\section{Examples of non-negatively curved basis exchange walks}\label{sec:nonnegex}
\subsection{Non-negative curvature from smallness}
We have the following corollaries to Theorem \ref{thm:lb} for rank-2 matroids, matroids over small ground sets, and graphic matroids over small graphs:
\begin{corollary}\label{cor:rank2}
    The basis exchange walk on any rank-$2$ matroid is positively curved.
\end{corollary}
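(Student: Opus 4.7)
The plan is to derive this as a direct instantiation of Theorem \ref{thm:lb} with $k=2$, handling the small-$n$ boundary separately. A rank-$2$ matroid on ground set $E$ has $|E|=n\geq 2$, so I would split into three cases based on the relationship between $n$ and $k=2$.

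First, if $n=k=2$, the matroid has a single basis, so there are no distinct adjacent pairs $S\sim T$ and the curvature condition is vacuously satisfied (we may declare $\kappa=+\infty$, or at least take $\kappa>0$). Next, if $n=k+1=3$, the second clause of Theorem \ref{thm:lb} directly yields $\kappa\geq 1/k=1/2>0$. Finally, if $n>k+1=3$, the first clause of Theorem \ref{thm:lb} yields
\[
\kappa \;\geq\; -1+\frac{2}{k}+\frac{3(k-1)}{k(n-k+1)} \;=\; -1+1+\frac{3}{2(n-1)} \;=\; \frac{3}{2(n-1)} \;>\; 0.
\]
In every case the lower bound is strictly positive, which is exactly the claim.

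There is really no main obstacle here: the work was already done in Theorem \ref{thm:lb}, and the corollary is essentially a substitution. The only small subtlety worth mentioning is why the two separate cases ($n=k+1$ versus $n>k+1$) are both needed — the generic bound $-1+2/k+3(k-1)/(k(n-k+1))$ specializes at $k=2,n=3$ to $3/4$, which also happens to be positive, but since Theorem \ref{thm:lb} was stated under the hypothesis $n>k+1$, a clean proof should still invoke the dedicated $n=k+1$ clause rather than extrapolate. I would therefore keep the case split explicit and conclude by taking the minimum of the two positive values.
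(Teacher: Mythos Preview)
Your proposal is correct and matches the paper's approach exactly: the paper states this as an immediate corollary of Theorem~\ref{thm:lb} without further argument, and your substitution $k=2$ (with the case split on $n$) is precisely the intended derivation. The only extra content you add is the degenerate $n=k=2$ case, which the paper tacitly ignores; your handling of it is fine.
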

\begin{corollary}\label{cor:size7}
    The basis exchange walk on any matroid with $n \leq 7$ is non-negatively curved.
\end{corollary}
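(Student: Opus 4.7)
The plan is to reduce the corollary to Theorem \ref{thm:lb} after separately disposing of a handful of degenerate and boundary configurations, and then to verify a simple polynomial inequality on the small set of surviving $(k,n)$ pairs.

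First I would handle the trivial and boundary ranges of the rank $k$. If $k \in \{0, n\}$ the matroid has a unique basis and the walk is constant; if $k = 1$ every down step erases the current basis and the up step samples a fresh uniform basis, so the chain is trivially non-negatively curved. If $k = 2$, Corollary \ref{cor:rank2} gives positive curvature directly (and in fact is itself a specialization of Theorem \ref{thm:lb}, which for $k=2$ yields $\kappa \ge 3/(2(n-1)) > 0$). Finally, if $n = k+1$, the second clause of Theorem \ref{thm:lb} immediately gives $\kappa \ge 1/k > 0$.

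For the remaining regime $3 \le k \le n-2$, I would invoke the main clause of Theorem \ref{thm:lb} and ask when its lower bound is non-negative. Clearing the positive denominator $k(n-k+1)$ in
\[-1 + \frac{2}{k} + \frac{3(k-1)}{k(n-k+1)} \ge 0\]
and simplifying reduces the problem to the single polynomial inequality $(k-1)(k+1) \ge n(k-2)$. Under the hypothesis $n \le 7$, the constraint $3 \le k \le n-2$ leaves only the pairs $(k,n) \in \{(3,5),(3,6),(3,7),(4,6),(4,7),(5,7)\}$, and the inequality becomes a direct numerical check in each case (for instance, at $(5,7)$ it reads $4 \cdot 6 = 24 \ge 21 = 7 \cdot 3$, and the tightest case is $(4,7)$, where $15 \ge 14$).

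The main ``obstacle'' here is bookkeeping rather than genuine difficulty: one must be careful to partition $(k,n)$ into the trivial regimes ($k \in \{0, 1, n\}$), the boundary $n = k+1$, the rank-$2$ case, and the main regime handled by Theorem \ref{thm:lb}, and check that every admissible pair with $n \le 7$ falls into one of these. No substantive calculation is required beyond the six-pair verification above, which is consistent with this corollary playing the role of a sanity check on the bound of Theorem \ref{thm:lb} rather than requiring new ideas.
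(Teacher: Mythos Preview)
Your proposal is correct and is exactly the argument the paper has in mind: Corollary~\ref{cor:size7} is stated without proof as an immediate consequence of Theorem~\ref{thm:lb}, and your reduction to the inequality $(k-1)(k+1) \ge n(k-2)$ together with the finite check on the surviving pairs (plus the disposal of the degenerate ranks $k \in \{0,1,2,n-1,n\}$) is precisely how one cashes this out. Your bookkeeping is, if anything, more careful than the paper requires.
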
 
\begin{corollary}
    The basis exchange walk on any graphic matroid induced by a simple graph $G = (V,E)$ with $\#V \leq 4$ is non-negatively curved. 
\end{corollary}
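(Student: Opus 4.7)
The proof will be essentially a one-line reduction to Corollary \ref{cor:size7}. The plan is to observe that for the graphic matroid associated with a simple graph $G = (V, E)$, the ground set of the matroid is the edge set $E$, so $n = \#E$. For a simple graph with $\#V \leq 4$, we have $\#E \leq \binom{\#V}{2} \leq \binom{4}{2} = 6 \leq 7$. Therefore the hypothesis of Corollary \ref{cor:size7} is satisfied, and we conclude that the basis exchange walk is non-negatively curved.

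There is no real obstacle here, since the statement is an immediate consequence of the preceding corollary once the edge-count bound is observed; the use of ``simple graph'' is precisely what allows the bound $\#E \leq \binom{\#V}{2}$. As a sanity check, one can instead apply Theorem \ref{thm:lb} directly by splitting on the rank $k$. For a connected $4$-vertex $G$ we have $k = 3$ and $n \leq 6$, so
\[\kappa \geq -1 + \frac{2}{3} + \frac{3 \cdot 2}{3(n-2)} = -\frac{1}{3} + \frac{2}{n-2},\]
which is non-negative whenever $n \leq 8$, and in particular for $n \leq 6$. For disconnected $G$ on $\leq 4$ vertices the rank of the resulting matroid drops to at most $2$, and Corollary \ref{cor:rank2} already guarantees strictly positive curvature. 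Either route yields the claim, and the reduction to Corollary \ref{cor:size7} is the cleanest presentation.
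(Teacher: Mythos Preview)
Your proposal is correct and is essentially what the paper intends: the corollary is stated without proof as an immediate consequence of Theorem~\ref{thm:lb} (via Corollary~\ref{cor:size7}), and your observation that a simple graph on at most four vertices has at most $\binom{4}{2}=6\le 7$ edges is exactly the missing one line. The additional sanity check via a direct rank/size case split is fine but unnecessary.
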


\subsection{Uniform matroid}
\begin{lemma}
    The basis exchange walk over the rank-$k$ uniform matroid with size $n$ ground set satisfies \[\kappa \geq 1 - \frac{(k-1)(n-k)}{k(n-k+1)}.\]
\end{lemma}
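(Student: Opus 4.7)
The plan is to apply the down-step coupling from Theorem \ref{thm:lb} directly and exploit the fact that in the uniform matroid, every $k$-subset of the ground set is a basis. This means that for any basis $B$ and any $u \in B$, we have $N(B-u) = E \setminus (B-u)$, which has size exactly $n-k+1$. So the cardinalities in the coupling are fully determined, and the only work is to compute the overlap $N(S-u_i) \cap N(T-u_i)$.

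The first step is to observe that we are always in Case~2 of the proof of Theorem \ref{thm:lb}: with $S=\{s\}\cup U'\cup\{u_i\}$ and $T=\{t\}\cup U'\cup\{u_i\}$ adjacent, the set $U'+s+t$ has $k$ elements, so it is a basis of the uniform matroid. The second step is a direct set computation: $N(S-u_i)\cap N(T-u_i)=E\setminus (U'+s+t)$, which has size $n-k$, and moreover $N(S-u_i)=\{t\}\cup\big(N(S-u_i)\cap N(T-u_i)\big)$ and similarly $N(T-u_i)=\{s\}\cup\big(N(S-u_i)\cap N(T-u_i)\big)$. In particular, the ``bad'' case of the coupling where $S$ picks $v\notin N(S-u_i)\cap N(T-u_i)$ with $v\neq t$ simply cannot occur.

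The third step is to read off the conditional expected distance. Given $u_i$ is dropped, with probability $\tfrac{1}{n-k+1}$ both walks jump to $U'+s+t$ (distance $0$), and with probability $\tfrac{n-k}{n-k+1}$ they add a common element $v$ from the intersection and end up at $U'+s+v$ and $U'+t+v$ (distance $1$). So
\[
\mathbb{E}\!\left(d(X,Y)\,\big|\,u_i\text{ dropped}\right) \;=\; \frac{n-k}{n-k+1}.
\]
Combined with the probability $\tfrac{1}{k}$ event that $s$ (resp.\ $t$) is dropped, which gives distance $0$, and averaging over the $k-1$ choices of $u_i$, we obtain
\[
\mathcal{W}(P(S,\cdot),P(T,\cdot)) \;\leq\; \frac{k-1}{k}\cdot\frac{n-k}{n-k+1},
\]
which rearranges to the stated bound on $\kappa$.

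No part of this is really an obstacle; the main thing to be careful about is justifying that no up-step of $S$ lands outside $\{t\}\cup N(T-u_i)$, which is where the tight intersection count gives the sharper bound than the generic inequality $2-\tfrac{3}{n-k+1}$ used in Theorem \ref{thm:lb}. Everything else is arithmetic.
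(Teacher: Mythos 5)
Your proof is correct and follows the same route as the paper: apply the down-step coupling, note that for the uniform matroid every drop of a $u_i$ lands in Case~2 with $\#N(S-u_i)=\#N(T-u_i)=n-k+1$ and intersection of size $n-k$, and compute $\mathbb{E}d(X,Y)=\frac{k-1}{k}\cdot\frac{n-k}{n-k+1}$. The paper simply states this expectation without the set-theoretic justification you supply, so your write-up is a more detailed version of the same argument.
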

\begin{proof}
    The down-step coupling gives \[\mathbb{E}d(X,Y) = \frac{1}{k}\cdot 0 + \frac{k-1}{k}\left (\frac{1}{n-k+1}\cdot 0 + \frac{n-k}{n-k+1}\cdot 1 \right ).\]
\end{proof}

\subsection{V\'{a}mos matroid}
The \textit{V\'{a}mos matroid} is a rank-$4$ matroid over a ground set of size $8$ that cannot be represented as a matrix over any field. However, this matroid is close, in some sense, to a uniform matroid; it includes 65 of the possible 70 subsets of size $4$ as bases. The excluded sets are the shaded parallelograms in Figure 2\footnote{Provided by David Eppstein, CC0.}.
\begin{figure}[h!]
    \centering
    \includegraphics[scale = 0.1]{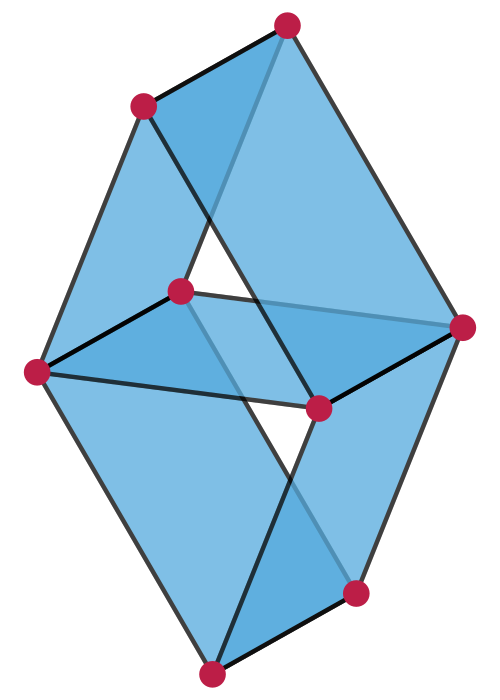}
    \caption{Dependent sets in the V\'{a}mos matroid.}
    \label{fig:placeholder}
\end{figure}

The following lemma shows that the V\'{a}mos matroid is positively curved. We can interpret this as a consequence of how close the V\'{a}mos matroid is to the rank-$4$ uniform matroid over $8$ elements.

\begin{lemma}
    The basis exchange walk over the V\'{a}mos matroid is positively curved.
\end{lemma}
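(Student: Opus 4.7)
The direct application of Theorem~\ref{thm:lb} with $k=4$, $n=8$ only yields $\kappa \geq -1/20$, so I would instead apply the refined corollary above. The structural input I plan to exploit is that in the V\'{a}mos matroid, \emph{every $3$-element subset of the ground set is contained in at most one of the five size-$4$ circuits}: checking the picture, the five non-bases pairwise intersect in $0$ or $2$ elements, and so they cannot share a common $3$-subset.

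Fix adjacent bases $S$ and $T$ and any $u \in J$, and let $U' = (S \cap T) \setminus \{u\}$, so $|U'| = 2$, $S - u = U' + s$, and $T - u = U' + t$. The property above immediately gives $\#N(S-u), \#N(T-u) \in \{4, 5\}$, since each is $n-k+1 = 5$ minus the number of circuits containing the corresponding $3$-subset. For the intersection, observe that $N(S-u) \cap N(T-u) \subseteq E \setminus (U' + s + t)$, a set of size $4$ that contains $u$; each side can remove at most one further element from this set, and if both sides do then they must remove distinct elements (otherwise the shared circuit would equal $U' + s + t$, contradicting $u \in J$). Hence $c_u := \#(N(S-u) \cap N(T-u)) \geq 2$.

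I would then verify that each summand in the corollary satisfies $(2 + c_u)/\max\{\#N(S-u), \#N(T-u)\} \geq 1$ by a short case split. If the maximum is $5$, then at least one of the two $3$-subsets lies in no circuit, so only at most one bad element is excluded from the intersection on that side, forcing $c_u \geq 3$ and giving ratio $\geq 5/5 = 1$. If the maximum is $4$, the ratio is at least $(2+2)/4 = 1$. Each summand being at least $1$ exactly cancels the $-\#J/k$ term in the corollary, leaving $\kappa \geq 1/k = 1/4 > 0$.

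The main obstacle I anticipate is coordinating the three quantities: a small intersection $c_u$ is only possible when both $3$-subsets lie in circuits, but this simultaneously forces both $\#N(S-u)$ and $\#N(T-u)$ down to $4$, keeping the ratio at $1$. The ``at most one circuit per $3$-subset'' property of the V\'{a}mos matroid is exactly what makes this balance hold uniformly, and once it is established the positivity of $\kappa$ falls out of the corollary without further computation.
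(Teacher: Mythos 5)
Your proof is correct and is essentially the paper's argument: the paper analyzes the down-step coupling directly with the same case split on $\#N(S-u),\#N(T-u)\in\{4,5\}$ and the same lower bounds on $\#(N(S-u)\cap N(T-u))$, which it explicitly notes is equivalent to applying the corollary you invoke. (One small slip: your parenthetical reason that the two excluded elements must be distinct is garbled---the correct reason is that otherwise the two circuits $U'+s+x$ and $U'+t+x$ would share the $3$-subset $U'+x$, violating your ``one circuit per $3$-subset'' property---but the claim itself is fine.)
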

\begin{proof}
    Again we show the down-step coupling demonstrates positive curvature.
    
    Let $S = \{a,b,c,s\}$ and $T = \{a,b,c,t\}$ for some distinct elements $a,b,c,s,t.$ For $u \neq s, t$ we consider the possibilities for $N(S-u)$ and $N(T-u)$. By definition of the V\'{a}mos matroid, for any $S \in \mathcal{B}$ and any $u \in S$, $\#N(S-u) = 5$ or $4$ (i.e., it either contains all the elements not in $S -u$ or one less). We compute the expected distance in each case.

    Suppose $\#N(S-u) = \# N(T-u) = 5$. Since $t \in N(S-u)$, the down-step coupling in this case gives \[\mathbb{E}\left (d(X,Y) \hspace{1mm} | \hspace{1mm} u \text{ dropped} \right ) = \frac{4}{5}.\]

    Suppose $\#N(S-u) = 5$ and $\#N(T-u) = 4$. Again since $t \in N(S-u)$, in this case we have \[\mathbb{E}\left (d(X,Y) \hspace{1mm} | \hspace{1mm} u \text{ dropped} \right ) \leq \frac{1}{5} \cdot 0 + \frac{3}{5}\cdot 1 + \frac{1}{5} \cdot 2 = 1.\]

    Suppose $\#N(S-u) = \#N(T-u) = 4$. As seen in the proof of Theorem \ref{thm:lb}, if $S-u + t \notin \mathcal{B}$, we can couple so the distance is always 1, so we assume $t \in S-u$ and $s \in T-u$. Then we must have $\{u, d\} \subseteq N(S-u) \cap N(T-u)$ for some $d$. Conditioned on being in this case, we have \[\mathbb{E}\left (d(X,Y) \hspace{1mm} | \hspace{1mm} u \text{ dropped} \right ) \leq \frac{1}{4} \cdot 0 + \frac{1}{2}\cdot 1 + 2 \cdot \frac{1}{4} = 1.\]

    In all cases with $u \neq s, t$ the expected distance is less than or equal to 1, so we see the basis exchange walk on this matroid has positive curvature.
\end{proof}

\subsection{Graphic matroids}
Now we consider a specific class of graphic matroids that induce positively curved basis exchange walks. This shows that in addition to having non-negatively curved basis exchange walks on graphic matroids induced by sufficiently small graphs, one can have non-negatively curved basis exchange walks on graphic matroids induced by arbitrarily large graphs.
\begin{lemma}
    Basis exchange walks over graphic matroids induced by graphs with edge-disjoint cycles are positively curved.
\end{lemma}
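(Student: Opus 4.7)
My plan is to apply the down-step coupling of Section~\ref{sec:lb} to adjacent spanning trees $S \sim T$ and show the expected one-step distance is strictly below $1$. Write $S = U \cup \{s\}$ and $T = U \cup \{t\}$ with $|U| = k-1$. The fundamental cycle of $t$ with respect to $S$ is a cycle of the graph containing both $s$ and $t$; since the graph has edge-disjoint cycles, this cycle is the unique cycle $C$ of the graph containing $s$ or $t$, say of length $\ell$. Partition $U = U_C \sqcup U_{\bar C}$, where $U_C = U \cap C$ and $U_{\bar C} = U \setminus C$, so $|U_C| = \ell - 2$ and $|U_{\bar C}| = k - \ell + 1$.

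I would then analyze the coupling case-by-case on the dropped edge $u$. Dropping $s$ from $S$ (paired with dropping $t$ from $T$) occurs with probability $1/k$ and contributes distance $0$. For $u \in U_{\bar C}$, the edge $u$ lies outside $C$, so the spanning path $C \setminus \{t\} \subseteq S$ of the cycle $C$ survives in $S - u$; this keeps the endpoints of $t$ in a single component of $S - u$, so $S - u + t \notin \mathcal{B}$, which puts us in Case~1 of Theorem~\ref{thm:lb} and keeps the walks at distance~$1$. The main case is $u \in U_C$, where I claim $N(S - u) = \{u, t\}$ and $N(T - u) = \{u, s\}$.

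For this claim, I would use that in a cactus each edge of $E \setminus S$ is the unique missing edge of the unique cycle of the graph containing it. Given $x \in N(S - u) \setminus \{u\}$, write $x$ as the missing edge of some cycle $C'$; if $C' \neq C$, then $u \notin C'$ by edge-disjointness, so $S$'s spanning path of $C'$ is intact in $S - u$ and both endpoints of $x$ lie in a single component of $S - u$, contradicting $x \in N(S - u)$. Hence $C' = C$ and $x = t$. With $|N(S-u)| = |N(T-u)| = 2$ and common intersection $\{u\}$, the down-step coupling pairs $t \leftrightarrow s$ and $u \leftrightarrow u$, yielding conditional expected distance $1/2$.

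Summing, $\mathcal{W}(P(S,\cdot),P(T,\cdot)) \leq \frac{1}{k}(0) + \frac{\ell-2}{k} \cdot \frac{1}{2} + \frac{k-\ell+1}{k} \cdot 1 = \frac{2k - \ell}{2k}$, giving $\kappa \geq \frac{\ell}{2k} > 0$. The main technical point is the structural claim that $|N(S-u)| = 2$ in Case~2; this relies crucially on edge-disjointness, since in a non-cactus graph additional non-tree edges could reconnect the two components of $S - u$ and push the Case~2 expected distance toward $2$ rather than $1/2$.
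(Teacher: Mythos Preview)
Your proof is correct and follows essentially the same approach as the paper: apply the down-step coupling and split into cases according to whether the dropped edge $u$ lies in the cycle $C$ through $s$ and $t$ or not. The only cosmetic difference is that the paper further subdivides your $U_{\bar C}$ case into bridge edges (where $\#N(S-u)=1$) and edges in other cycles (where $\#N(S-u)=2$), whereas you handle both at once by invoking Case~1 of Theorem~\ref{thm:lb}; your version is slightly cleaner and additionally yields the quantitative bound $\kappa \geq \ell/(2k)$.
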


\begin{proof}
    We express the graph as a union of cycles, $C_i$, and edges with effective resistance 1, $G'$: \[G = C_1 + \hdots + C_k + G'.\] Any spanning tree $S$ of the graph must be a union of spanning trees $S_i$ of each cycle and $G'$ so we have \[S = S_1 + \hdots + S_k + G'.\] Any adjacent spanning tree $T$ must differ in only one spanning tree of one cycle. Without loss of generality we assume the different spanning tree is in the cycle labeled as $C_1$, so \[T = T_1 + S_2 + \hdots + S_k + G'.\] 

    We must show for any $u \in S \cap T$, we can couple walks from $S$ and $T$ conditioned on dropping $u$ so that the expected distance is not greater than 1. We consider different cases for $u$. 
    
    Suppose $u \in G'$. For any $B \in \mathcal{B}$ we have $N(B-u) = \{u\}$ (i.e, the only edge that can be added on the up step is $u$). For this case, we have \[\mathbb{E}\left (d(X,Y) \hspace{1mm} | \hspace{1mm} u \text{ dropped} \right ) = 1.\] 
    
    Suppose $u \in S_i$ with $i \neq 1$, then $N(S-u) = N(T-u) = \{u,u'\}$ where $\{u'\} = C_i \backslash S_i$. We can always add the same element to each of $S-u$ and $T-u$, so we again have \[\mathbb{E}\left (d(X,Y) \hspace{1mm} | \hspace{1mm} u \text{ dropped} \right ) = 1.\] 
    
    Suppose $u \in S_1$, then $N(S-u) = \{u, t\}$ and $N(T-u) = \{u, s\}$ where $t = C_1 \backslash S_1$ and $s = C_1 \backslash T_1$ (similar to previous notation, $s$ is simply the edge included in $S$ but not in $T$). Since $S_1 = C_1 - t$ and $T_1 = C_1 - s$, $S_1 - u + t = T_1 - u + s$. We couple by adding $u$ to both $S$ and $T$ or adding $t$ to $S$ and $s$ to $T$. In this case, we have \[\mathbb{E}\left (d(X,Y) \hspace{1mm} | \hspace{1mm} u \text{ dropped} \right ) = 1/2.\] 

    This covers all cases, so the basis exchange walk is positively curved.
\end{proof}

\section{Upper bound on curvature}\label{sec:ub}

We now give an upper bound on the curvature of the basis exchange walk (we note the main function of this bound will be to prove specific examples have negative curvature). In order to state this bound, we fix any $S \sim T$ and define the following notation.
\begin{definition}
    Let $J = \{u : t \in N(S-u) \text{ and } u\neq s\}.$
\end{definition}

\begin{definition}
    For any $u \in J$, let $A_u = (N(S-u) - t) \backslash N(T-u)$, i.e., the elements that are not $t$ and that can be added to $S-u$ but \textit{cannot} be added to $T-u$.
\end{definition}

\begin{remark}
    These sets will be to identify neighbors of $S$ that are all far from all but a small number of neighbors of $T$ (see Proposition \ref{prop:neighfar} for a more precise statement). If $t \notin N(S-u)$, then as seen in the proof of Theorem \ref{thm:lb}, $N(S-u) = N(T-u)$. In this case, every neighbor of $S$ induced by dropping $u$ has a unique neighbor of $T$ induced by dropping $u$.
\end{remark}

\begin{theorem}\label{thm:ub}
    Consider any bases $S \sim T$ of a matroid $\mathcal{M}$. Then the basis exchange walk on $\mathcal{M}$ satisfies \[\kappa \leq \frac{1}{k} + \frac{1}{k}\cdot\sum_{u\in J}\left (\frac{1}{\#N(T-u)} - \frac{\#A_u}{\#N(S-u)} \right ). \]
\end{theorem}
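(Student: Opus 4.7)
The plan is to lower bound the Wasserstein distance $\mathcal{W}(\mu, \nu)$, where $\mu := P(S, \cdot)$ and $\nu := P(T, \cdot)$, for the fixed adjacent bases $S \sim T$; by the definition of Ollivier--Ricci curvature this immediately gives $\kappa \leq 1 - \mathcal{W}(\mu, \nu)$ and hence the stated bound. Write $S = \{s, u_1, \ldots, u_{k-1}\}$ and $T = \{t, u_1, \ldots, u_{k-1}\}$ and let $(X, Y)$ be an arbitrary coupling with $X \sim \mu$ and $Y \sim \nu$. My overall strategy is the inequality
\[\mathbb{E}[d(X, Y)] \geq (1 - \mathbb{P}[X = Y]) + \mathbb{P}[d(X, Y) \geq 2],\]
which follows from $d(X, Y) \geq \mathbf{1}[X \neq Y] + \mathbf{1}[d(X, Y) \geq 2]$. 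I will upper bound the first term by enumerating the common neighbors of $S$ and $T$, and lower bound the second by identifying non-common neighbors of $S$ whose only nearby neighbors of $T$ lie in a small, low-$\nu$-mass set.

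A case analysis on which element each walk drops shows that the common neighbors of $S$ and $T$ are exactly $S$ itself, $T$ itself, the states $z_u := (S \setminus \{s, u\}) \cup \{t\}$ for each $u \in J$, and the states $z_x := (S - s) \cup \{x\}$ for each $x \in N(S-s) \setminus \{s, t\}$. Computing $\mu(z), \nu(z)$ explicitly and using the maximal-coupling bound $\mathbb{P}[X = Y] \leq \sum_z \min(\mu(z), \nu(z))$, then loosening via $\min(\mu(z_u), \nu(z_u)) \leq \nu(z_u) = 1/(k\#N(T-u))$ on the $z_u$ summands, the contributions from $S$, $T$, and the $z_x$'s telescope to $\#N(S-s)/(k\#N(S-s)) = 1/k$ (since $\#N(S-s) = \#N(T-t)$ and there are $\#N(S-s) - 2$ of the $z_x$'s). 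This yields
\[\mathbb{P}[X = Y] \leq \frac{1}{k} + \frac{1}{k}\sum_{u \in J}\frac{1}{\#N(T-u)}.\]

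For the lower bound on $\mathbb{P}[d(X, Y) \geq 2]$, I identify the ``bad'' set $A := \bigcup_{u \in J}\{S - u + v : v \in A_u\}$ of non-common neighbors of $S$ reachable by dropping some $u \in J$ and adding some $v \in A_u$; by the definitions and the symmetric-exchange analysis in the proof of Theorem~\ref{thm:lb}, no element of $A$ is a neighbor of $T$. A short symmetric-difference computation shows that for $x = S - u_i + v \in A$, the only neighbors of $T$ within graph distance $\leq 1$ of $x$ are $S$, $z_{u_i}$, and possibly $T - t + v$ (the last only when $v \in N(T-t)$). Letting $B_1(A)$ be the union of these close-neighbor sets over $x \in A$, the bipartite marginal inequality $\mathbb{P}[X \in A, Y \in B_1(A)] \leq \nu(B_1(A))$ forces any coupling to satisfy $\mathbb{P}[X \in A, d(X, Y) \geq 2] \geq \mu(A) - \nu(B_1(A))$, where $\mu(A) = \sum_{u \in J} \#A_u/(k\#N(S-u))$.

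The main obstacle is the careful algebraic bookkeeping that combines these two bounds into the exact form of Theorem~\ref{thm:ub}. Both bounds involve the $\nu$-mass at $\{S\} \cup \{z_u : u \in J\}$, so the proof must show that these contributions combine and cancel correctly so that the surviving terms simplify to exactly $\frac{k-1}{k} + \frac{1}{k}\sum_{u \in J}(\#A_u/\#N(S-u) - 1/\#N(T-u))$. I expect the trickiest piece to be handling the $T - t + v$ contributions in $B_1(A)$, which depend on the intersection $\bigcup_{u \in J} A_u \cap N(T-t)$; either these must cancel against the loose bound on $\mathbb{P}[X = Y]$, or a more refined $\min$-bound on the $z_u$ summands must be used so that these terms absorb cleanly.
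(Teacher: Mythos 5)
Your skeleton is the same as the paper's: the inequality $\mathbb{E}[d(X,Y)] \geq 1 - \mathbb{P}[X=Y] + \mathbb{P}[d(X,Y)\geq 2]$, the complete enumeration of common neighbors (your table matches the paper's, modulo the typo $z_u = (S\setminus\{s,u\})\cup\{t\}$, which should be $(S\setminus\{u\})\cup\{t\}$), and the key geometric fact that each $S-u+a$ with $a\in A_u$ is within distance $1$ of only $S$, $z_u$, and possibly $T-t+a$ among neighbors of $T$ (this is exactly the paper's Proposition 5.2). However, the step you explicitly defer as ``algebraic bookkeeping'' is a genuine gap, not a formality: the two bounds you state, added together, do \emph{not} give the theorem. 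Your bound on $\mathbb{P}[X=Y]$ spends the full mass $\nu(z_u)$ and $\nu(S)$ on the diagonal events $\{X=Y=z_u\}$ and $\{X=Y=S\}$, while your bound $\mathbb{P}[X\in A, d\geq 2]\geq \mu(A)-\nu(B_1(A))$ spends those same masses a second time inside $\nu(B_1(A))$, and additionally leaves the uncancelled positive terms $\nu(T-t+v)$. Combining the two as written yields $\kappa \leq \frac{1}{k}+\frac{2}{k}\sum_{u\in J}\frac{1}{\#N(T-u)}-\frac{1}{k}\sum_{u\in J}\frac{\#A_u}{\#N(S-u)}+(\text{extra positive terms})$, which is strictly weaker than the claimed bound.

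The missing idea is to do a single joint accounting rather than two marginal ones, which is how the paper proceeds (via its disjoint events $\mathcal{C}$, $\mathcal{F}$, $\mathcal{B}$, $\mathcal{E}$). Concretely: observe that $A$ is disjoint from the set $C$ of common neighbors, and that $B_1(A)\subseteq C\setminus\{T\}$ (each $T-t+v$ that is a basis equals one of your $z_v$'s, and $S, z_u\in C$). Hence for each fixed $y\in C\setminus\{T\}$ the events $\{X=y, Y=y\}$ and $\{X=x, Y=y\}$ for $x\in A$ are pairwise disjoint, so their total probability is at most $\nu(y)$ \emph{once}, not twice; the state $y=T$ contributes only its diagonal term, bounded by $\mu(T)$. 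This gives $\mathbb{P}[X=Y]+\mathbb{P}[X\in A,\, d(X,Y)\leq 1]\leq \frac{1}{k}+\frac{1}{k}\sum_{u\in J}\frac{1}{\#N(T-u)}$, and subtracting $\mu(A)$ recovers the theorem exactly. Without this disjointness observation (or an equivalent refined use of the min in the maximal-coupling bound), your argument as written does not close.
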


\begin{proof}

Let $S$ and $T$ be adjacent bases with $S = \{s, u_1, \hdots, u_{k-1}\}$ and $T = \{t, u_1, \hdots, u_{k-1}\}$. Let $X \sim P(S,\cdot)$ and $Y \sim P(T, \cdot)$. Note for any coupling we have \begin{align}
    \mathbb{E}d(X,Y) &\geq 0 \cdot \mathbb{P}(d(X,Y) = 0) + 1 \cdot \mathbb{P}(d(X,Y) = 1) + 2 \cdot \mathbb{P}(d(X,Y) \geq 2) \\
    &= (1-\mathbb{P}(d(X,Y) = 0) - \mathbb{P}(d(X,Y) \geq 2)) + 2 \cdot \mathbb{P}(d(X,Y) \geq 2) \\
    &= 1 - \mathbb{P}(d(X,Y) = 0) + \mathbb{P}(d(X,Y) \geq 2).
\end{align} 
Recalling the definition of curvature, we see then \[\kappa \leq \mathbb{P}(d(X,Y) = 0) - \mathbb{P}(d(X,Y) \geq 2).\] We will upper bound this quantity for any coupling.

We start by considering events where $d(X,Y) \geq 2$. In particular, we want to identify neighbors of $S$ that are from most neighbors of $T$. The following proposition shows that any neighbor of $S$ of the form $S-u + a$ for some $u \in J$ and some $a \in A_u$ is far from almost all neighbors of $T$.

\begin{proposition}\label{prop:neighfar}
    For any $u \in J$ and $a \in A_u$, $S-u+a$ is at distance at least 2 from any neighbor of $T$ aside from $T-u+s,$ $T-t+s,$ or $T-t+a$ (if these sets are bases).
\end{proposition}
\begin{proof}
    Without loss of generality, let $u = u_1$. Then $S-u+a = \{s, a, u_2, \hdots, u_{k-1}\}$. We proceed by cases on which element $T$ drops.

    \smallskip \noindent \textbf{Case 1:} $T$ drops $u_1$ and adds some element $b$, so $T-u_1 + b = \{t, b, u_2, \hdots, u_{k-1}\}$. 
    
    \noindent Recall by definition of $A_{u_1}$, $b \notin N(T-u_1)$ so $b \neq a$. Since $a \neq u_1,t$, these are at distance 2 unless $b = s$.

    \smallskip \noindent \textbf{Case 2:} $T$ drops $t$ and adds some element $b$, so $T-t + b = \{b, u_1, \hdots, u_{k-1}\}$. 
    
    \noindent Since $a \neq u_1$, these are at distance 2 unless $b = s$ or $b = a$.

    \smallskip \noindent \textbf{Case 3:} $T$ drops $u_i \neq u_1$ and adds $b$. 
    
    \noindent Then $T-u_i+b$ contains $t$ and $u_1$, neither of which are contained in $S-u_1+a$, so $T-u_i+b$ is at distance at least 2 from $S-u_1+a$.
\end{proof}

We define the following relevant events:
\begin{align}
    \mathcal{F}_{u,a} &:= \{S \text{ drops } u \text{ and adds } a \in A_u\} &&\hspace{-2cm}\text{for any } u \in J, a \in A_u\\
    \mathcal{B}_u &:= \{T \text{ drops } u \text{ and adds } s\} &&\hspace{-2cm}\text{for any }u \in J,\\
    \mathcal{E}_a&:= \{T \text{ drops } t \text{ and adds } a\} &&\hspace{-2cm}\text{for any }a \in N(T-t).
\end{align} The above proposition shows \begin{align}\label{eq:probtwo}
    \mathbb{P}(d(X,Y) \geq 2) \geq \sum_{u \in J} \sum_{a \in A_u} \mathbb{P}(\mathcal{F}_{u,a}) - \mathbb{P}(\mathcal{F}_{u,a} \cap \mathcal{B}_u) - \mathbb{P}(\mathcal{F}_{u,a} \cap \mathcal{E}_s) - \mathbb{P}(\mathcal{F}_{u,a} \cap \mathcal{E}_a).
\end{align}

Now we consider events where $d(X,Y) = 0$. The following table gives all possible pairs of exchanges by $S$ and $T$ so that $d(X,Y) = 0$. 

\begin{center}
\begin{tabular}{ | m{4cm}| m{6.5cm} | } 
 \hline
 Exchange by $S$ & Exchange by $T$ \\ \hline \hline
 $S$ drops $s$, adds $t$ & $T$ drops $u$, adds $u$ for any $u \in T$ \\
 $S$ drops $s$, adds $a \neq t$ & $T$ drops $t$, adds $a$\\
 $S$ drops $u \neq s$, adds $u$ & $T$ drops $t$, adds $s$ \\
 $S$ drops $u \in J$, adds $t$ & $T$ drops $u$, adds $s$ \\
 \hline
\end{tabular}
\end{center} We define the additional relevant events, based on the exchange made by $S$:
\begin{align}
    \mathcal{C}_{1,a} &:= \{S \text{ drops } s, \text{ adds } a\}, \\
    \mathcal{C}_{2,u} &:= \{S \text{ drops } u, \text{ adds } u\}  &&\hspace{-3cm} \text{for any } u \in S, u \neq s,\\
    \mathcal{C}_{3,u} &:= \{S \text{ drops } u, \text{ adds } t\} &&\hspace{-3cm} \text{for any } u \in J.
\end{align} Then we have
\begin{align}\label{eq:probzero}
    \mathbb{P}(d(X,Y) = 0) \leq \mathbb{P}(\mathcal{C}_{1,t}) + \sum_{a \neq t} \mathbb{P}(\mathcal{C}_{1,a} \cap \mathcal{E}_{a}) + \sum_{\substack{u \in S \\ u \neq s}} \mathbb{P}(\mathcal{C}_{2,u} \cap \mathcal{E}_s) + \sum_{u \in J} \mathbb{P}(\mathcal{C}_{3,u} \cap \mathcal{B}_u)  
\end{align}

Now we want to combine these two expressions. First note that if we collect some of the terms in the difference between \ref{eq:probzero} and \ref{eq:probtwo}, we can simplify as follows:
\begin{align}
    \mathbb{P}(\mathcal{C}_{1,t}) + \sum_{a \neq t} \mathbb{P}(\mathcal{C}_{1,a} \cap \mathcal{E}_{a}) + \sum_{\substack{u \in S \\ u \neq s}} \mathbb{P}(\mathcal{C}_{2,u} \cap \mathcal{E}_s) + \sum_{u \in J} \sum_{a \in A_u} (\mathbb{P}(\mathcal{F}_{u,a} \cap \mathcal{E}_s) + \mathbb{P}(\mathcal{F}_{u,a} \cap \mathcal{E}_a)) \\ \leq \mathbb{P}(\mathcal{C}_{1,t}) + \sum_{\substack{a \in N(T-t) \\ a \neq t}} \mathbb{P}(\mathcal{E}_a) = \frac{1}{k} \cdot \left (\frac{1}{\#N(S-s)} + \frac{\#N(S-s) - 1}{\#N(S-s)} \right )= \frac{1}{k}.
\end{align}
Passing to the full expression we have
\begin{align}
    \mathbb{P}(d(X,Y) = 0) - \mathbb{P}(d(X,Y) \geq 2) &\leq \frac{1}{k} + \sum_{u \in J} \mathbb{P}(\mathcal{C}_{3,u} \cap \mathcal{B}_u) + \sum_{u \in J} \sum_{a \in A_u} \mathbb{P}(\mathcal{F}_{u,a} \cap \mathcal{B}_u) - \sum_{u \in J} \sum_{u \in A_u} \mathbb{P}(\mathcal{F}_{u,a}) \\
    &\leq \frac{1}{k} + \sum_{u \in J} \mathbb{P}(\mathcal{B}_u) - \sum_{u \in J}\sum_{u \in A_u} \mathbb{P}(\mathcal{F}_{u,a}).
\end{align}
Explicitly computing these probabilities, we have \begin{align}
    \kappa &\leq \frac{1}{k} + \frac{1}{k}\sum_{u\in J}\left (\frac{1}{\#N(T-u)} - \frac{\#A_u}{\#N(S-u)} \right ).
\end{align}
\end{proof}

An immediate corollary of the above is a sufficient condition for negative curvature:
\begin{corollary}
    The basis exchange walk on $\mathcal{M}$ is negatively curved if there exist  bases $S \sim T$ such that \[\sum_{u\in J}\left (\frac{\#A_u}{\#N(S-u)} - \frac{1}{\#N(T-u)} \right ) > 1.\]
\end{corollary}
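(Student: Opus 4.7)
The plan is to derive this corollary directly from Theorem \ref{thm:ub} by straightforward algebraic rearrangement of the upper bound on $\kappa$. Recall that Theorem \ref{thm:ub} asserts, for any adjacent pair $S \sim T$, that
\[
\kappa \leq \frac{1}{k} + \frac{1}{k}\sum_{u\in J}\left(\frac{1}{\#N(T-u)} - \frac{\#A_u}{\#N(S-u)}\right).
\]
Since this inequality holds for every choice of $S \sim T$, it suffices to exhibit any single adjacent pair for which the right-hand side is strictly negative; the definition of Ollivier--Ricci curvature as a minimum over adjacent states then forces the walk to be negatively curved.

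First I would fix the pair $S \sim T$ given by the hypothesis, and then multiply the assumed inequality by $-1/k$ and add $1/k$ to both sides. This transforms the hypothesis
\[
\sum_{u\in J}\left(\frac{\#A_u}{\#N(S-u)} - \frac{1}{\#N(T-u)}\right) > 1
\]
into the equivalent statement
\[
\frac{1}{k} + \frac{1}{k}\sum_{u\in J}\left(\frac{1}{\#N(T-u)} - \frac{\#A_u}{\#N(S-u)}\right) < 0.
\]
Combining with the bound from Theorem \ref{thm:ub} yields $\kappa < 0$, which is the desired conclusion.

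Since the argument is purely a rewriting of Theorem \ref{thm:ub}, there is no substantive obstacle; the only thing to be careful about is the strictness of the inequality. The assumption strictly exceeds $1$, which ensures the upper bound on $\kappa$ is strictly less than $0$ (not merely $\leq 0$), giving genuine negative curvature rather than just non-positive curvature. I would keep the write-up to a single sentence noting that the claim is a direct rearrangement of Theorem \ref{thm:ub}.
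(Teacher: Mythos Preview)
Your proposal is correct and matches the paper's approach exactly: the paper states this as an ``immediate corollary'' of Theorem~\ref{thm:ub} without further proof, and your algebraic rearrangement is precisely the intended one-line derivation.
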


\section{Examples of negatively curved basis exchange walks}\label{sec:negex}
By Corollary \ref{cor:rank2}, all basis exchange walks over rank-$2$ matroids are positively curved. However, as soon as we move to rank-$3$ matroids, this is no longer true as shown in Section \ref{sec:rank3}. One might then hope that certain classes of matroids, for example regular matroids, induce non-negatively curved basis exchange walks. In Section \ref{sec:graphiccounter}, we show this is not the case: we give a graphic matroid (thus also a regular matroid) with a negatively curved basis exchange walk.

\subsection{Rank-3 matroid}\label{sec:rank3}
The bound from Theorem \ref{thm:ub} says that in order to ensure negative curvature, it suffices to have $S$ and $T$ so that $N(S-u)$ is very different from $N(T-u)$ for many $u$. We construct a matroid with this property.

Let $E = \{s, t, u, u', v_1, \hdots, v_5, w_1, \hdots, w_5\}$. We define a matroid over $E$ by defining the bases:
\begin{itemize}
    \item $S = \{s,u,u'\}$,
    \item $T = \{t,u,u'\}$,
    \item $\{s,t,u\}, \{s,t,u'\}$,
    \item $\{s, u, v_i\}, \{s, u', v_i\}$ for all $i$,
    \item $\{t, u, w_i\}, \{t, u', w_i\}$ for all $i$,
    \item $\{u,u',v_i\}, \{u,u',w_i\}$ for all $i$, and
    \item $\{u, v_i, w_j\}, \{u', v_i, w_j\}$ for all $i, j$.
\end{itemize} This is a $\mathbb{R}$-linear matroid, with the elements identified with the following vectors in $\mathbb{R}^3$: \[s = \begin{bmatrix} 1 \\ 0 \\ 0 \end{bmatrix}, \hspace{2mm} t = \begin{bmatrix}
    0 \\ 1 \\ 0
\end{bmatrix}, \hspace{2mm} u = \begin{bmatrix}
    0 \\ 0 \\ 1
\end{bmatrix}, \hspace{2mm} u' = \begin{bmatrix}
    1 \\ 1 \\ 1
\end{bmatrix}, \hspace{2mm} v_i = \begin{bmatrix}
    0 \\ 1 \\ 0
\end{bmatrix}, \hspace{2mm} w_i = \begin{bmatrix}
    1 \\ 0 \\ 0
\end{bmatrix}.\]
\smallskip

\noindent We have $J = \{u, u'\}$, $\#A_j = 5$, and $N(S-j) = N(T-j)= 7$ for $j \in J$, so by Theorem \ref{thm:ub} \begin{align}
    \kappa \leq \frac{1}{3} + \frac{1}{3} \cdot \left (\frac{2}{7} - \frac{2\cdot 5}{7} \right ) = -\frac{1}{21}.
\end{align}

\subsection{Graphic matroid}\label{sec:graphiccounter}
We want to construct a graphic matroid with spanning trees $S$ and $T$ so that $N(S-u)$ and $N(T-u)$ are very different for many $u$. We look at the graphic matroid induced by $K_6$; in order to make the set $J$ as large as possible, we choose $S$ and $T$ to be paths. We label the ``outer" edges of $K_6$ as in Figure 3, and consider the spanning trees $S = \{s,1,2,3,4\}$ and $T = \{t, 1,2,3,4\}$.

\begin{figure}[h]
\centering
\begin{tikzpicture}[scale=0.7, thick]

% Draw hexagon vertices
\foreach \i in {1,...,6} {
    \coordinate (P\i) at ({90 + (\i - 1) * 60} : 1);
}

% Draw hexagon edges
\foreach \i in {1,...,6} {
    \pgfmathtruncatemacro{\j}{mod(\i,6) + 1}
    \draw[black, very thick] (P\i) -- (P\j);
}

% Draw all diagonals
\foreach \i in {1,...,6} {
    \foreach \j in {1,...,6} {
        \ifnum\i<\j
            \pgfmathtruncatemacro{\diff}{\j - \i}
            \ifnum\diff>1
                \ifnum\diff<5
                    \draw[gray, thick] (P\i) -- (P\j);
                \fi
            \fi
        \fi
    }
}

% Add red labels outside the hexagon
\node[red] at ($(P1) + (-0.6,-0.1)$) {1};
\node[red] at ($(P2) + (-0.2,-0.5)$) {2};
\node[red] at ($(P3) + (0.3,-0.4)$) {t};
\node[red] at ($(P4) + (0.55,0.1)$) {3};
\node[red] at ($(P5) + (0.2,0.4)$) {4};
\node[red] at ($(P6) + (-0.3,0.35)$) {s};

\end{tikzpicture}
\caption{Labeling of outer edges of $K_6$.}
\end{figure}

We count the relevant quantities for Theorem \ref{thm:ub}. In the following table, let $i$ be the element dropped in the ``down" step of the down-up walk.

\begin{center}
\begin{tabular}{ | m{0.2cm} | m{1.7cm}| m{1.7cm} | m{1.7cm} | } 
 \hline
 $i$ & $\# N(S-i)$ & $\# N(T-i)$ & $\# A_i $\\ \hline \hline
 1 & $2 \cdot 4$ & $1 \cdot 5$ & 5\\ \hline
 2 & $1 \cdot 5$ & $2 \cdot 4$ & 2\\ \hline
 3 & $1 \cdot 5$ & $2 \cdot 4$ & 2\\ \hline
 4 & $2 \cdot 4$ & $1 \cdot 5$ & 5\\
 \hline
\end{tabular}
\end{center}

\noindent By Theorem \ref{thm:ub} we have \begin{align}
    \kappa &\leq \frac{1}{5} + \frac{1}{5} \left (\frac{2}{5} + \frac{2}{8} - \frac{2 \cdot 5}{8} - \frac{2\cdot 2}{5} \right ) = -\frac{2}{25}.
\end{align}

\section*{Acknowledgements}
The author would like to thank Nikhil Srivastava for introducing her to this question and Zack Stier for helpful conversations and comments. 

\bibliographystyle{alpha}
\bibliography{matroids}

\end{document}